\documentclass[12pt,reqno]{amsproc}

\textwidth=465pt \evensidemargin=0pt \oddsidemargin=0pt
\marginparsep=8pt \marginparpush=8pt \textheight=650pt
\topmargin=-25pt

\setlength{\parskip}{2pt}

\usepackage{amsmath,amsthm,amssymb,wasysym,mathtools}
\usepackage[inline]{enumitem}
\usepackage{caption}
\usepackage{subcaption}
\usepackage{graphicx}

\DeclareMathOperator{\RE}{Re}

\numberwithin{equation}{section}
\theoremstyle{plain}

\newtheorem{theorem}{Theorem}[section]

\theoremstyle{definition}

\allowdisplaybreaks

\begin{document}
	
\title{Radius of Starlikeness for Bloch Functions}
	
\author[Somya Malik]{Somya Malik}
\address{Department of Mathematics \\National Institute of Technology\\Tiruchirappalli-620015,  India }
\email{arya.somya@gmail.com}
	
\author{V. Ravichandran}
\address{Department of Mathematics \\National Institute of Technology\\Tiruchirappalli-620015,  India }
\email{vravi68@gmail.com; ravic@nitt.edu}

\begin{abstract}
For normalised analytic functions  $f$ defined on the open unit disc $\mathbb{D}$ satisfying the  condition $\sup_{z\in \mathbb{D}}(1-|z^2|) |f'(z)|\leq 1$, known as   Bloch functions, we determine various starlikeness  radii. 	
\end{abstract}

\subjclass[2010]{30C80,  30C45}

\thanks{The first author is supported by  the UGC-JRF Scholarship.}

\maketitle

\section{Introduction}The class $\mathcal{A}$ consists of all analytic functions $f$ on the disc $\mathbb{D}:=\{z\in \mathbb{C}: |z|<1\}$ and normalized by the conditions $f(0)=0$ and $ f'(0)=1$. The class  $\mathcal{S}$ consists of all univalent functions $f\in \mathcal{A}$. The class $\mathcal{B}$ of Bloch functions   consists of all functions $f\in\mathcal{A}$ satisfying $ \sup_{z\in \mathbb{D}}(1-|z|^2)|f'(z)|\leq 1$ (see \cite{Bonk},   \cite{Pom}). Bonk \cite{Bonk} has shown that the radius of starlikeness  of the class $\mathcal{B}$ is the same as radius of univalence and it equals $1/\sqrt{3}\approx 0.57735$. It also follows from his distortion inequalities that the radius of close-to-convexity (with respect to $z$)  is also $1/\sqrt{3}$. We extend the radius of starlikeness result by find various other starlikeness radii.   An analytic function  $f$    is subordinate to the analytic function $g$, denoted as $f\prec g$, if there exists a function $w:\mathbb{D}\rightarrow \mathbb{D}$ with $w(0)=0$ satisfying $f(z)=g(w(z))$. If $g$ is   univalent, then $f\prec g$ if and only if $f(0)=g(0)$ and $f(\mathbb{D})\subseteq g(\mathbb{D})$. The subclass of $\mathcal{S}$ of starlike functions $\mathcal{S}^{*}$ is the collection of functions $f\in \mathcal{S}$ with the condition $\RE (zf'(z)/f(z))>0$ where $z \in \mathbb{D}$. The subclass $\mathcal{K}$ of convex functions consists of the functions in $\mathcal{S}$ with $\RE (1+zf''(z)/f'(z))>0$ for $z\in \mathbb{D}$. This characterisation gives a characterization of  these classes in terms of  the class $\mathcal{P}$ of Carath\'{e}odory functions or the functions with positive real part, comprising of analytic functions $p$ with $p(0)=1$ satisfying $\RE (p(z))>0$ or equivalently the subordination $p(z)\prec (1+z)/(1-z)$. Thus, the classes of starlike and convex functions are consist of $f\in \mathcal{A}$ with $zf'(z)/f(z) \in \mathcal{P}$ and $1+zf''(z)/f'(z) \in \mathcal{P}$ respectively. Several subclasses of starlike and convex functions were defined using subordination of $zf'(z)/f(z)$ and $1+zf''(z)/f'(z)$ to some function in $\mathcal{P}$. Ma and Minda \cite{MaMinda} gave a unified treatment of growth, distortion, rotation  and coeffcient inequalities for function in classes $\mathcal{S}^{*}(\varphi)=\{f\in \mathcal{A}:zf'(z)/f(z) \prec \varphi (z) \}$ and $\mathcal{K}(\varphi)=\{f\in \mathcal{A}:1+zf''(z)/f'(z) \prec \varphi (z)\}$, where $\varphi \in \mathcal{P}$, starlike with respect to 1, symmetric about the $x$-axis and $\varphi '(0)>0$. Numerous classes were defined for various choices of the function $\varphi$ such as $(1+Az)/(1+Bz),\ \mathit{e}^{z},\ z+\sqrt{1+z^2}$ and so on.
	
For any two subclasses $\mathcal{F}$ and $\mathcal{G}$ of $\mathcal{A}$, the $\mathcal{G}-$ radius of the class $\mathcal{F}$, denoted as $R_{\mathcal{G}} (\mathcal{F})$ is the largest number $R_{\mathcal{G}} \in (0,1)$ such that $r^{-1}f(rz)\in \mathcal{G}$ for all $f\in \mathcal{F}$ and $0<r<R_{\mathcal{G}}$. 	We determine the radius of  various subclasses of starlike functions such as starlikeness associated with the exponential function, lune, cardioid and a particular rational function for the class of Bloch Functions.

\section{Radius Problems}
In 2015, Mendiratta \emph{et al.} \cite{Exp} introduced a subclass $S^{*}_{\mathit{e}}$ of starlike functions associated with the exponential function. This class $S^{*}_{\mathit{e}}$ consists of all functions $f\in\mathcal{A}$ satisfying the subordination $ zf'(z)/f(z)\prec e^z$. This subordination  is equivalent to the inequality $|\log (zf'(z)/f(z))| <1$. Our first theorem gives the $S^{*}_{\mathit{e}}$-radius of Bloch functions.

\begin{theorem}
	The $S^{*}_{\mathit{e}}$-radius of the class $\mathcal{B}$ of Bloch functions is   \[\mathcal{R}_{S^{*}_{\mathit{e}}}(\mathcal{B})= \frac{1}{4} \sqrt{3} \left(3-3 e+\sqrt{1-10 e+9 e^2}\right) \approx 0.517387.\] The obtained radius is sharp.
\end{theorem}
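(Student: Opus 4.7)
The plan is to reduce the $S_e^*$-radius problem to a disk-inclusion problem in the $w$-plane, apply the standard geometric lemma of \cite{Exp}, and verify sharpness via a Bonk-extremal Bloch function.

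First, starting from the defining inequality $(1-|z|^2)|f'(z)|\leq 1$ together with Bonk's sharp distortion/growth bounds for $\mathcal{B}$ (which in particular supply a matching lower bound on $|f(z)|/|z|$), I would derive a disk inclusion
\[
\left|\frac{zf'(z)}{f(z)} - a(r)\right| \leq \rho(r), \qquad |z|=r,
\]
for explicit real-valued $a(r),\rho(r)$ with $a(0)=1$, $\rho(0)=0$, and $a(r)=\rho(r)$ at $r=1/\sqrt{3}$ (so that the disk first touches the imaginary axis precisely at Bonk's starlikeness radius). Next, I invoke the lemma of Mendiratta \emph{et al.}\ \cite{Exp}: for $1/e<a\leq (e+e^{-1})/2$ one has $\{w:|w-a|\leq \rho\}\subseteq \Omega_e:=\{w:|\log w|<1\}$ iff $\rho\leq a-1/e$. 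Since $a(r)\to 1$ as $r\to 0$, the $S_e^*$-radius is the smallest positive root of $\rho(r)=a(r)-1/e$.

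Clearing denominators in $\rho(r)=a(r)-1/e$ produces a quadratic equivalent to
\[
2r^2 + 3\sqrt{3}(e-1)\,r - 3(e-1) = 0,
\]
whose positive root simplifies, via the quadratic formula and the identity $(e-1)(9e-1)=1-10e+9e^2$, to $\tfrac{1}{4}\sqrt{3}\bigl(3-3e+\sqrt{1-10e+9e^2}\bigr)$, the claimed value. For sharpness I would take the standard Bonk-extremal $L(z)=\tfrac{1}{2}\log\frac{1+z}{1-z}\in\mathcal{B}$, suitably rotated, and verify that at a boundary point with $|z|=\mathcal{R}_{S_e^*}(\mathcal{B})$ the estimate in the first step is attained, forcing $|\log(zL'(z)/L(z))|=1$.

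The main obstacle is the first step: the Bloch inequality by itself controls only $|f'|$, whereas $zf'/f$ also demands a matching lower bound on $|f(z)|/|z|$, and obtaining $a(r)$, $\rho(r)$ with the precise $\sqrt{3}\,r$-dependence (so that $r=1/\sqrt{3}$ recovers Bonk's starlikeness limit) is the delicate analytic input. Once this inequality is in hand with sharpness, the remaining steps are purely geometric and algebraic.
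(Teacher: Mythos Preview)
Your overall architecture---disk inclusion for $zf'(z)/f(z)$, then the lemma of \cite{Exp}, then solve the resulting quadratic---is exactly the paper's approach, and your quadratic agrees with the paper's equation $2R^2+3\sqrt{3}(e-1)R+3(1-e)=0$.

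Two points, however, need correction.

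\textbf{The disk inequality is not an obstacle.} What you describe as the ``delicate analytic input'' is already a theorem of Bonk \cite{Bonk}: for $f\in\mathcal{B}$ and $|z|=r<1/\sqrt{3}$,
\[
\left|\dfrac{zf'(z)}{f(z)}-\frac{\sqrt{3}}{\sqrt{3}-r}\right|\leq \frac{\sqrt{3}\,r}{(\sqrt{3}-r)(\sqrt{3}-2r)}.
\]
So $a(r)=\sqrt{3}/(\sqrt{3}-r)$ and $\rho(r)=\sqrt{3}\,r/\bigl((\sqrt{3}-r)(\sqrt{3}-2r)\bigr)$ are given to you; you do not need to re-derive them, and you should simply cite Bonk and move on.

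\textbf{Your extremal function is wrong.} The function $L(z)=\tfrac12\log\frac{1+z}{1-z}$ is indeed in $\mathcal{B}$ and is extremal for the Bloch seminorm, but it is \emph{not} extremal for Bonk's $zf'/f$ estimate, and it will not give sharpness here. For real $z=r$ one computes
\[
\frac{rL'(r)}{L(r)}=\frac{2r}{(1-r^2)\log\frac{1+r}{1-r}},
\]
and at $r=\mathcal{R}_{S^*_e}(\mathcal{B})\approx 0.5174$ this is approximately $1.23$, so $\bigl|\log(rL'(r)/L(r))\bigr|\approx 0.21\neq 1$. The correct extremal function (the one Bonk uses) is
\[
f_0(z)=\dfrac{\sqrt{3}}{4}\left\{1-3\left(\dfrac{z-1/\sqrt{3}}{1-z/\sqrt{3}}\right)^{2}\right\}=\dfrac{3z(3-2\sqrt{3}\,z)}{(3-\sqrt{3}\,z)^{2}},
\]
for which $zf_0'(z)/f_0(z)=(3\sqrt{3}-9z)/(2\sqrt{3}\,z^{2}-9z+3\sqrt{3})$ is rational. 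At $z=R$ the defining quadratic gives $2\sqrt{3}R^{2}-9R+3\sqrt{3}=e(3\sqrt{3}-9R)$, hence $zf_0'/f_0=1/e$ and $\bigl|\log(zf_0'/f_0)\bigr|=1$, which is the sharpness computation in the paper. Replace $L$ by $f_0$ and your argument goes through.
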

\begin{proof} For functions $f \in \mathcal{B}$, Bonk   \cite{Bonk} proved the following inequality
	\begin{equation}\label{eqn1}
	\left|\dfrac{zf'(z)}{f(z)}-\frac{\sqrt{3}}{\sqrt{3}-r}\right|\leq \frac{\sqrt{3}r}{(\sqrt{3}-r)(\sqrt{3}-2r)},\ \ |z|=r<\frac{1}{\sqrt{3}}.
	\end{equation}
The function \[h(r):=\frac{\sqrt{3}}{\sqrt{3}-r}-\frac{\sqrt{3}r}{(\sqrt{3}-r)(\sqrt{3}-2r)}=\frac{3-3\sqrt{3}r}{(\sqrt{3}-r)(\sqrt{3}-2r)}\]
is a decreasing funciton of $r$ for $0\leq r<1/\sqrt{3}=\mathcal{R}_{S^{*}}(\mathcal{B})$. The number $R=\mathcal{R}_{S^{*}_{\mathit{e}}}(\mathcal{B})<1/\sqrt{3}=\mathcal{R}_{S^{*}}(\mathcal{B})$ is the smallest positive root of the polynomial
\begin{equation}\label{eqR}
 2R^2+3\sqrt{3}(\mathit{e}-1)R+3(1-\mathit{e})=0
\end{equation}  or
 $h(R)=1/\mathit{e}$. Therefore, for $0\leq r< R$, it follows that  $ 1/e=h(R)<h(r)$ and hence  \begin{align}\label{eqn3}
\frac{\sqrt{3}r}{(\sqrt{3}-r)(\sqrt{3}-2r)}<\frac{\sqrt{3}}{\sqrt{3}-r}-\frac{1}{\mathit{e}}.
\end{align}
Thus \eqref{eqn1} and \eqref{eqn3} give \begin{align}\label{eqn3a} \left|\dfrac{zf'(z)}{f(z)}-\frac{\sqrt{3}}{\sqrt{3}-r}\right|< \frac{\sqrt{3}}{\sqrt{3}-r}-\frac{1}{\mathit{e}}, \quad |z|=r< R.\end{align}

 The  function $C(r)=\sqrt{3}/(\sqrt{3}-r )$   is an increasing function of $r$, so for $r\in [0,R)$,
 it follows that $C(r) \in [1,C(R))\subseteq [1,C(0.6))\approx [1,1.53001)\subseteq (0.367879,1.54308)\approx (1/\mathit{e},(\mathit{e}+\mathit{e}^{-1})/2)$.
By  \cite[Lemma 2.2]{Exp}, for $1/\mathit{e}<c<\mathit{e}$, we have $\{w:\ \left|w-c\right|<r_c\}\subseteq \{w:\ \left|\log (w)\right|<1\}$
when $r_c$ is given by
\begin{align}\label{eqn2}
r_c &=
\begin{dcases}
c- \mathit{e}^{-1} & \text{ if }\ \mathit{e}^{-1}<c\leq \frac{\mathit{e} +\mathit{e}^{-1}}{2} ,\\
\mathit{e} -c & \text{ if }\  \frac{\mathit{e} +\mathit{e}^{-1}}{2}\leq c<\mathit{e}.
\end{dcases}
\end{align}
 By \eqref{eqn3a},  we see that $w=zf'(z)/f(z)$, $|z|<R$,  satisfies $ |w-c|<c-\mathit{e}^{-1}$ and hence it follows that $   \left|\log (w)\right|<1 $. This shows that $S^{*}_{\mathit{e}}$-radius of the class $\mathcal{B}$ is at least $R$.

We now show that $R$ is the exact $S^{*}_{\mathit{e}}$-radius of the class $\mathcal{B}$.
The function $f_0:\mathbb{D}\to\mathbb{C}$ defined by  \[f(z)=\dfrac{\sqrt{3}}{4}\left\{1-3\left(\dfrac{z-\sqrt{1/3}}{1-\sqrt{1/3}z}\right)^2\right\}\ =\ \dfrac{3z(3-2\sqrt{3}z)}{(3-\sqrt{3}z)^2}\]
is an example of function in the class $\mathcal{B}$ and it serves as an extremal function for the various  problems.
For  this function, we have \[\dfrac{zf'(z)}{f(z)}=\dfrac{3\sqrt{3}-9z}{2\sqrt{3}z^2-9z+3\sqrt{3}}.\]
Using the equation \eqref{eqR},  we get
  $2\sqrt{3}R^2-9R+3\sqrt{3}=\mathit{e} (3\sqrt{3}-9R)$, thus, for $z=R$
\begin{align*}
\left|\log \left(\dfrac{zf'(z)}{f(z)}\right)\right| &=\left|\log \left(\dfrac{3\sqrt{3}-9z}{2\sqrt{3}z^2-9z+3\sqrt{3}}\right)\right|
 = \left|\log \left(\dfrac{1}{\mathit{e}}\right)\right|=1.
\end{align*}
This proves that $R$ is the exact $S^{*}_{\mathit{e}}$-radius of the class $\mathcal{B}$.
\end{proof}

Sharma \emph{et al.} studied the class $\mathcal{S}^{*}_{c}=\mathcal{S}^{*} (\phi _c)= \ \mathcal{S}^{*} (1+(4/3)z+(2/3)z^2)$ and gave  \cite[Lemma 2.5]{Cardioid} \\
For $1/3<c<3,$
\begin{align}\label{eqn4}
r_c &=
\begin{dcases}
\frac{3c-1}{3} & \text{ if }\ \frac{1}{3}<c\leq \frac{5}{3}\\
3-c & \text{ if } \ \frac{5}{3}\leq c<3
\end{dcases}
\end{align}

then $\{w: |w-c|<r_c\} \subseteq \Omega _c$. Here $\Omega_c$ is the region bounded by the cadioid $\{x+\iota y: (9x^2+9y^2-18x+5)^2 -16(9x^2+9y^2-6x+1)=0\}.$

\begin{theorem} \
	The $S^{*}_{c}$-radius $\mathcal{R}_{S^{*}_{c}}\approx 0.524423.$ This radius is sharp.
\end{theorem}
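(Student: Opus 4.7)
The plan is to mirror the structure of the proof of the previous theorem, replacing Mendiratta \emph{et al.}'s exponential lemma with Sharma \emph{et al.}'s cardioid disk lemma \eqref{eqn4}. The starting point will again be Bonk's disk inequality \eqref{eqn1}, which states that for $f\in \mathcal{B}$ and $|z|=r<1/\sqrt{3}$ the quantity $zf'(z)/f(z)$ lies in the closed disc centred at $c(r)=\sqrt{3}/(\sqrt{3}-r)$ with radius $\rho(r)=\sqrt{3}r/((\sqrt{3}-r)(\sqrt{3}-2r))$. Since $c(r)$ is increasing in $r$ with $c(0)=1$, and the expected radius $R\approx 0.524$ satisfies $c(R)<5/3$ (one checks $c(r)=5/3$ occurs only at $r=2\sqrt{3}/5\approx 0.693$), the centre lies throughout in the first branch $(1/3,5/3]$ of \eqref{eqn4}, so the applicable inclusion is $\{w:|w-c|<c-1/3\}\subseteq \Omega_c$.

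Next I would equate Bonk's bound with the cardioid tolerance, i.e.\ impose
\[
\frac{\sqrt{3}r}{(\sqrt{3}-r)(\sqrt{3}-2r)}=\frac{\sqrt{3}}{\sqrt{3}-r}-\frac{1}{3}=\frac{2\sqrt{3}+r}{3(\sqrt{3}-r)}.
\]
Clearing the common factor $(\sqrt{3}-r)$ and cross-multiplying collapses this to the quadratic
\[
r^{2}+3\sqrt{3}\,r-3=0,
\]
whose positive root $R=(\sqrt{39}-3\sqrt{3})/2$ is numerically $\approx 0.524423$. Since $\rho(r)$ grows faster than $c(r)-1/3$ on $[0,1/\sqrt{3})$, for $0\le r<R$ one obtains $\rho(r)<c(r)-1/3$, and then by the disk lemma $zf'(z)/f(z)\in \Omega_c$, which is exactly the condition $f\in S^{*}_{c}$ at the dilation $r^{-1}f(rz)$.

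For sharpness I would reuse the extremal function $f_0$ from the previous theorem, for which
\[
\frac{zf_0'(z)}{f_0(z)}=\frac{3\sqrt{3}-9z}{2\sqrt{3}\,z^{2}-9z+3\sqrt{3}}.
\]
Evaluating at $z=R$ and using the defining relation $R^{2}=3-3\sqrt{3}R$, the denominator simplifies to $3(3\sqrt{3}-9R)$, so $zf_0'(z)/f_0(z)=1/3$. Since $1/3=\phi_c(-1)$ is the unique boundary point of $\Omega_c$ farthest to the left (indeed the cusp/leftmost point of the cardioid), the image touches $\partial\Omega_c$ at $|z|=R$, proving that no larger radius can work.

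The only place requiring care is the verification that $c(r)$ does not cross $5/3$ before $r$ reaches $R$, so that the first, rather than the second, branch of \eqref{eqn4} applies; beyond this qualitative check the argument is essentially algebraic, and I do not anticipate a substantive obstacle.
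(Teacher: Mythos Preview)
Your proposal is correct and follows essentially the same route as the paper: Bonk's disc estimate \eqref{eqn1}, the first branch of the cardioid disc lemma \eqref{eqn4}, the defining quadratic $r^{2}+3\sqrt{3}\,r-3=0$, and sharpness via the extremal Bloch function $f_0$ with $zf_0'(z)/f_0(z)=1/3=\phi_c(-1)$ at $z=R$. Your added details---the closed form $R=(\sqrt{39}-3\sqrt{3})/2$ and the explicit check that $c(r)<5/3$ on $[0,R]$---are welcome refinements but do not alter the argument.
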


\begin{proof}
		$R=\mathcal{R}_{S^{*}_{c}}$ is the smallest positive root of the equation \[R^2+3\sqrt{3}R-3=0.\]
	The function \[h(r):=\frac{\sqrt{3}}{\sqrt{3}-r}-\frac{\sqrt{3}r}{(\sqrt{3}-r)(\sqrt{3}-2r)}=\frac{3-3\sqrt{3}r}{(\sqrt{3}-r)(\sqrt{3}-2r)}\]
	is a decreasing funciton of $r$ for $0\leq r<1/\sqrt{3}=\mathcal{R}_{S^{*}}$. \cite[Corollary, P.455]{Bonk} Note that the class $\mathcal{S}^{*}_{c}$  is a subclass of the parabolic starlike class $S^{*}$, Also since, $R=\mathcal{R}_{S^{*}_{c}}$ is the smallest positive root of the equation $h(r)=1/3$. For $0\leq r< R$, we have
	\begin{align}\label{eqn5}
	\frac{\sqrt{3}r}{(\sqrt{3}-r)(\sqrt{3}-2r)}<\frac{\sqrt{3}}{\sqrt{3}-r}-\frac{1}{3}
	\end{align}
	Thus \eqref{eqn1} and \eqref{eqn5} give \[\left|\dfrac{zf'(z)}{f(z)}-\dfrac{1}{1-ar}\right|< \frac{1}{1-ar}-\frac{1}{3}; \ |z|\leq r,\ a=\frac{1}{\sqrt{3}}.\]
	The center $C(r)$ of \eqref{eqn1} is an increasing function of $r$, so for $r\in [0,R),\ C(r) \in [1,C(R))\subseteq [1,c(0.6))\approx [1,1.53001)\subseteq (1/3,5/3)$. Now, by \eqref{eqn4} we get that the disc $\{w: |w-c|<c-1/3\} \subseteq \Omega _c. $
	\\
	\\
	For proving sharpness, consider the function \[f(z)=\dfrac{\sqrt{3}}{4}\left\{1-3\left(\dfrac{z-\sqrt{1/3}}{1-\sqrt{1/3}z}\right)^2\right\}\]
	for this function, $\dfrac{zf'(z)}{f(z)}=\dfrac{3\sqrt{3}-9z}{2\sqrt{3}z^2-9z+3\sqrt{3}},$\
	and using the equation for $R$, we get
	\\ $2\sqrt{3}r^2-9r+3\sqrt{3}=\ 3(3\sqrt{3}-9r)$, thus for $z=R$
	\begin{align*}
	\dfrac{zf'(z)}{f(z)}
	&= \dfrac{1}{3}\\
	&= \phi _c (-1).
	\end{align*}	
\end{proof}

The class $\mathcal{S}^{*}_{\leftmoon}=\mathcal{S}^{*} (z+\sqrt{1+z^2})$ was introduced in 2015 by Rain and Sok\'{o}l \cite{Sokol} in 2015 and proved that $f\in \mathcal{S}^{*}_{\leftmoon} \iff zf'(z)/f(z)$ lies in the lune region $\{w: |w^2-1|<2|w|\}$. Gandhi and Ravichandran \cite[Lemma 2.1]{Lune} proved that
\\
for $\sqrt{2}-1<c\leq \sqrt{2}+1,$
\begin{align}\label{eqn6}
\{w: |w-c|<1-|\sqrt{2}-c|\}\subseteq \{w: |w^2-1|<2|w|\}
\end{align}

\begin{theorem}
	The $\mathcal{S}^{*}_{\leftmoon}$ radius, $\mathcal{R}_{\mathcal{S}^{*}_{\leftmoon}} \approx 0.507306.$ The radius is sharp.
\end{theorem}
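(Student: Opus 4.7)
The plan is to follow the same template used in the previous two theorems: apply Bonk's inequality \eqref{eqn1} to trap $zf'(z)/f(z)$ in a disc centred at $c(r):=\sqrt{3}/(\sqrt{3}-r)$ with radius $\sqrt{3}r/((\sqrt{3}-r)(\sqrt{3}-2r))$, then force that disc inside the lune region by means of \eqref{eqn6}. Concretely, define $R$ as the smallest positive root of the condition $h(R)=\sqrt{2}-1$, where $h$ is the decreasing function from the first theorem. Cross-multiplying produces a quadratic equation in $R$, which I expect to reduce (after using $2-\sqrt{2}=\sqrt{2}(\sqrt{2}-1)$) to the clean polynomial $2R^2+3\sqrt{6}\,R-3\sqrt{2}=0$, whose relevant root is $R=\sqrt{3}-\sqrt{6}/2\approx 0.507306$.

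Next, since $c(r)$ is increasing and $c(0)=1$, I would verify that $c(r)$ lies in the admissible range $(\sqrt{2}-1,\sqrt{2}+1]$ of \eqref{eqn6} on $[0,R]$; in fact a direct check shows $c(R)=\sqrt{2}$ exactly, so for $r\in[0,R)$ we have $c(r)\in[1,\sqrt{2})$ and therefore $|\sqrt{2}-c(r)|=\sqrt{2}-c(r)$. With this, the defining relation $h(R)=\sqrt{2}-1$ together with the monotonicity $h(r)>h(R)$ for $r<R$ rewrites as
\[
\frac{\sqrt{3}\,r}{(\sqrt{3}-r)(\sqrt{3}-2r)}<\frac{\sqrt{3}}{\sqrt{3}-r}-(\sqrt{2}-1)=1-\bigl|\sqrt{2}-c(r)\bigr|,
\]
which combined with \eqref{eqn1} places $zf'(z)/f(z)$ inside the disc $\{w:|w-c(r)|<1-|\sqrt{2}-c(r)|\}$. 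By \eqref{eqn6} this disc is contained in the lune $\{w:|w^2-1|<2|w|\}$, giving $f\in\mathcal{S}^{*}_{\leftmoon}$ on $|z|<R$.

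For sharpness I would reuse the extremal function $f_0(z)=3z(3-2\sqrt{3}z)/(3-\sqrt{3}z)^2$ from the first theorem, for which a direct manipulation gives $zf_0'(z)/f_0(z)=(3\sqrt{3}-9z)/(2\sqrt{3}z^2-9z+3\sqrt{3})=h(z)$. Then at $z=R$ the defining equation yields $zf_0'(z)/f_0(z)=\sqrt{2}-1$, and a quick check shows $|(\sqrt{2}-1)^2-1|=2\sqrt{2}-2=2|\sqrt{2}-1|$, so this value lies exactly on the boundary of the lune. Hence $R$ cannot be enlarged.

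The only non-routine step should be the algebraic simplification leading to $2R^2+3\sqrt{6}\,R-3\sqrt{2}=0$ and the verification that $c(R)=\sqrt{2}$; the latter is fortunate because it ensures that the entire interval $r\in[0,R]$ falls in the first branch of the piecewise lemma \eqref{eqn6}, so no case-split in $c$ is needed. Everything else is a direct transcription of the argument from Theorem 2.1.
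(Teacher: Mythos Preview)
Your proposal is correct and follows essentially the same approach as the paper: Bonk's disc \eqref{eqn1}, the lune-inclusion lemma \eqref{eqn6}, the monotonicity of $h$ and $c$, and sharpness via the extremal Bloch function all appear in the paper's proof in the same order. Your derivation is in fact slightly cleaner, since you simplify the quadratic to $2R^2+3\sqrt{6}\,R-3\sqrt{2}=0$ and extract the closed form $R=\sqrt{3}-\sqrt{6}/2$ (the paper's sharpness point $z=\tfrac{1}{2}(2\sqrt{3}-\sqrt{6})$ is exactly this value), and you note explicitly that $c(R)=\sqrt{2}$ so only one branch of \eqref{eqn6} is needed.
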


\begin{proof}
	 For $R=\mathcal{R}_{\mathcal{S}^{*}_{\leftmoon}}$, the center of \eqref{eqn1},\  $C(R)=\sqrt{2};$ since $C(r)$ is an increasing function of $r$, thus for $0\leq r< R,\ 1\leq C(r)<\sqrt{2}$, or\[ \text{for}\  0\leq r<R,\ \sqrt{2}-C(r)\geq 0.\]
	  So, $R=\mathcal{R}_{\mathcal{S}^{*}_{\leftmoon}}$ is the smallest positive root of the equation \[(2-2\sqrt{2})R^2+\sqrt{3}(3\sqrt{2}-6)R+3(2-\sqrt{2})=0.\]
	
	  The function \[h(r):=\frac{\sqrt{3}}{\sqrt{3}-r}-\frac{\sqrt{3}r}{(\sqrt{3}-r)(\sqrt{3}-2r)}=\frac{3-3\sqrt{3}r}{(\sqrt{3}-r)(\sqrt{3}-2r)}\]
	  is a decreasing funciton of $r$ for $0\leq r<1/\sqrt{3}=\mathcal{R}_{S^{*}}$. \cite[Corollary, P.455]{Bonk} Note that the class $\mathcal{S}^{*}_{\leftmoon}$  is a subclass of the parabolic starlike class $S^{*}$. Also since, $R=\mathcal{R}_{\mathcal{S}^{*}_{\leftmoon}}$ is the smallest positive root of the equation $h(r)=\sqrt{2}-1$. For $0\leq r< R$, we have
	  \begin{align}\label{eqn7}
	  \frac{\sqrt{3}r}{(\sqrt{3}-r)(\sqrt{3}-2r)}<1-\sqrt{2} +\frac{\sqrt{3}}{\sqrt{3}-r}=1-\left|\sqrt{2}-\frac{\sqrt{3}}{\sqrt{3}-r}\right|.
	  \end{align}
	  Thus \eqref{eqn1} and \eqref{eqn7} give \[\left|\dfrac{zf'(z)}{f(z)}-\dfrac{1}{1-ar}\right|<1-\left|\sqrt{2}-\frac{1}{1-ar}\right|; \ |z|\leq r,\ a=\frac{1}{\sqrt{3}}.\]
	  The center $C(r)$ of \eqref{eqn1} is an increasing function of $r$, so for $r\in [0,R),\ C(r) \in [1,C(R))\subseteq [1,c(0.6))\approx [1,1.53001)\subseteq (\sqrt{2}-1,\sqrt{2}+1)$. Now, by \eqref{eqn6} we get that the $R$ is the required radius.
	\\
	\\
	\\
	Consider the  \[f(z)=\dfrac{\sqrt{3}}{4}\left\{1-3\left(\dfrac{z-\sqrt{1/3}}{1-\sqrt{1/3}z}\right)^2\right\}\]
	for this function, $\dfrac{zf'(z)}{f(z)}=\dfrac{3\sqrt{3}-9z}{2\sqrt{3}z^2-9z+3\sqrt{3}},$\\
	and we can easily see that for $z=\frac{1}{2}[2\sqrt{3}-\sqrt{6}],$
	\[\left|\left(\dfrac{zf'(z)}{f(z)}\right)^2-1\right|=\ 2\left(\dfrac{zf'(z)}{f(z)}\right)\ =\ 2(\sqrt{2}-1).\] Thus, the result is sharp.
\end{proof}

The next class that we consider is the class of starlike functions associated with a rational function. Kumar and Ravichandran \cite{Rational} introduced the class of starlike functions associated with the rational function $\psi (z)=1+ ((z^2+kz)/(k^2-kz))$ where $k=\sqrt{2}+1$, denoted by $\mathcal{S}^{*}_{R}=\mathcal{S}^{*}(\psi(z))$ They proved\cite[Lemma 2.2]{Rational} that \\
for $2(\sqrt{2}-1)<c<2,$
\begin{align}\label{eqn8}
r_c &=
\begin{dcases}
c-2(\sqrt{2}-1)\ & \text{ if }\ 2(\sqrt{2}-1)<c\leq \sqrt{2}\\
2-c\ & \text{ if }\ \sqrt{2}\leq c<2
\end{dcases}
\end{align}

then $\{w: |w-c|<r_c\} \subseteq \psi (\mathbb{D})$

\begin{theorem}
	The $\mathcal{S}^{*}_{R}$ radius is the smallest positive root of the polynomial $4(1-\sqrt{2})r^2+3\sqrt{3}(2\sqrt{2}-3)r+3(3-2\sqrt{2})$ that is  $\mathcal{R}_{\mathcal{S}^{*}_{R}} \approx 0.349865.$ The result is sharp.
\end{theorem}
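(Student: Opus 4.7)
The proof should follow the same template as the three preceding theorems: localise $zf'(z)/f(z)$ in Bonk's disk via \eqref{eqn1}, and then apply the disk-in-region lemma \eqref{eqn8} of Kumar and Ravichandran. For $f\in\mathcal{B}$ and $|z|=r<1/\sqrt{3}$ the quantity $zf'(z)/f(z)$ lies in $\{w:|w-C(r)|<\rho(r)\}$, where $C(r)=\sqrt{3}/(\sqrt{3}-r)$ and $\rho(r)=\sqrt{3}r/((\sqrt{3}-r)(\sqrt{3}-2r))$; the $\mathcal{S}^{*}_{R}$-radius will be the largest $r$ for which this disk remains inside $\psi(\mathbb{D})$.

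First I would identify the relevant branch of \eqref{eqn8}. The center $C(r)$ increases from $C(0)=1$ and crosses $\sqrt{2}$ only at $r=(2\sqrt{3}-\sqrt{6})/2\approx 0.507$; since the expected answer $\approx 0.35$ sits comfortably below this crossover, throughout the relevant interval we are in the regime $2(\sqrt{2}-1)<C(r)\le\sqrt{2}$ and so $r_{C(r)}=C(r)-2(\sqrt{2}-1)$. The inclusion $\{|w-C(r)|<\rho(r)\}\subseteq\psi(\mathbb{D})$ then reduces to $\rho(r)\le C(r)-2(\sqrt{2}-1)$, equivalently
\[
h(r):=C(r)-\rho(r)=\frac{3-3\sqrt{3}r}{(\sqrt{3}-r)(\sqrt{3}-2r)}\ge 2(\sqrt{2}-1).
\]
Since $h$ is strictly decreasing on $[0,1/\sqrt{3})$, the critical $R$ is the smallest positive root of $h(r)=2(\sqrt{2}-1)$. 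Expanding $(\sqrt{3}-r)(\sqrt{3}-2r)=3-3\sqrt{3}r+2r^{2}$ and clearing denominators yields exactly
\[
4(1-\sqrt{2})R^{2}+3\sqrt{3}(2\sqrt{2}-3)R+3(3-2\sqrt{2})=0,
\]
whose smallest positive root is $\approx 0.349865$; this establishes $\mathcal{R}_{\mathcal{S}^{*}_{R}}\ge R$.

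For sharpness I would reuse the extremal Bloch function
\[
f_{0}(z)=\frac{\sqrt{3}}{4}\left\{1-3\left(\frac{z-1/\sqrt{3}}{1-z/\sqrt{3}}\right)^{2}\right\}=\frac{3z(3-2\sqrt{3}z)}{(3-\sqrt{3}z)^{2}}
\]
from the earlier theorems, for which $zf_{0}'(z)/f_{0}(z)=(3\sqrt{3}-9z)/(2\sqrt{3}z^{2}-9z+3\sqrt{3})$. The defining polynomial for $R$ rearranges to $2\sqrt{3}R^{2}-9R+3\sqrt{3}=(3\sqrt{3}-9R)/(2(\sqrt{2}-1))$, so at $z=R$ one gets $zf_{0}'(z)/f_{0}(z)=2(\sqrt{2}-1)=\psi(-1)\in\partial\psi(\mathbb{D})$, and the bound cannot be enlarged.

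The main obstacle is purely algebraic bookkeeping: one must keep the signs of $\sqrt{2}-1$ versus $1-\sqrt{2}$ and of $2\sqrt{2}-3$ versus $3-2\sqrt{2}$ straight when clearing denominators, and one should verify on the nose that $C(R)<\sqrt{2}$ so that the first branch of \eqref{eqn8} is genuinely the binding one. A single sign slip would flip the relevant root of the quadratic.
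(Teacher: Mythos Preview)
Your argument is correct and follows essentially the same route as the paper: both localise $zf'(z)/f(z)$ via Bonk's disk \eqref{eqn1}, check that the centre $C(r)$ stays in the interval $(2(\sqrt{2}-1),\sqrt{2}]$ so that the first branch of \eqref{eqn8} applies, solve $h(r)=2(\sqrt{2}-1)$ to obtain the stated quadratic, and verify sharpness with the extremal Bloch function $f_{0}$ at $z=R$ giving $zf_{0}'(R)/f_{0}(R)=2(\sqrt{2}-1)=\psi(-1)$. Your explicit computation of the crossover point $C(r)=\sqrt{2}$ at $r\approx 0.507$ is a slightly cleaner justification of the branch choice than the paper's numerical interval estimate via $C(0.4)$, but the content is identical.
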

\begin{proof}
		$R=\mathcal{S}^{*}_{R}$ is the smallest positive root of the equation \[4(1-\sqrt{2})R^2+3\sqrt{3}(2\sqrt{2}-3)R+3(3-2\sqrt{2})=0.\]
	The function \[h(r):=\frac{\sqrt{3}}{\sqrt{3}-r}-\frac{\sqrt{3}r}{(\sqrt{3}-r)(\sqrt{3}-2r)}=\frac{3-3\sqrt{3}r}{(\sqrt{3}-r)(\sqrt{3}-2r)}\]
	is a decreasing funciton of $r$ for $0\leq r<1/\sqrt{3}=\mathcal{R}_{S^{*}}$. \cite[Corollary, P.455]{Bonk} Note that the class $\mathcal{S}^{*}_{R}$  is a subclass of the parabolic starlike class $S^{*}$. Also since, $R=\mathcal{S}^{*}_{R}$ is the smallest positive root of the equation $h(r)=2(\sqrt{2}-1)$. For $0\leq r< R$, we have
	\begin{align}\label{eqn9}
	\frac{\sqrt{3}r}{(\sqrt{3}-r)(\sqrt{3}-2r)}<\frac{\sqrt{3}}{\sqrt{3}-r}-2(\sqrt{2}-1)
	\end{align}
	Thus \eqref{eqn1} and \eqref{eqn9} give \[\left|\dfrac{zf'(z)}{f(z)}-\dfrac{1}{1-ar}\right|< \frac{1}{1-ar}-2(\sqrt{2}-1); \ |z|\leq r,\ a=\frac{1}{\sqrt{3}}.\]
	The center $C(r)$ of \eqref{eqn1} is an increasing function of $r$, so for $r\in [0,R),\ C(r) \in [1,C(R))\subseteq [1,c(0.4))\approx [1,1.30029)\subseteq (2(\sqrt{2}-1),\sqrt{2})$. Now, by \eqref{eqn8} we get that the disc $\{w: |w-c|<x-2(\sqrt{2}-1)\} \subseteq \psi (\mathbb{D}). $
	\\
	\\
	To show that the result is sharp, consider the function \[f(z)=\dfrac{\sqrt{3}}{4}\left\{1-3\left(\dfrac{z-\sqrt{1/3}}{1-\sqrt{1/3}z}\right)^2\right\}\]
	for this function, $\dfrac{zf'(z)}{f(z)}=\dfrac{3\sqrt{3}-9z}{2\sqrt{3}z^2-9z+3\sqrt{3}},$\
	and using the equation for $R$ we get
	\\ $3\sqrt{3}-9r=(2\sqrt{2}-2)(2\sqrt{3}r^2-9r+3\sqrt{3})$ thus for $z=R$
	\begin{align*}
	\dfrac{zf'(z)}{f(z)}
	&= 2\sqrt{2}-2\\
	&= \psi(-1).
	\end{align*}
\end{proof}

\begin{theorem}
	For the class $\mathcal{B}$ the following results hold:
	\begin{enumerate}
		\item The Lemniscate starlike radius,\  $R_{\mathcal{S}^{*}_{L}}=\frac{2\sqrt{3}-\sqrt{6}}{4}\approx 0.253653.$
		\item The starlike radius associated with the sine function,\ $R_{\mathcal{S}^{*}_{sin}}=\frac{\sqrt{3}\sin 1}{2+2\sin 1}\approx 0.395735.$
		\item The nephroid radius, \ $R_{\mathcal{S}^{*}_{Ne}}=\frac{\sqrt{3}}{5}\approx 0.34641.$
		\item The sigmoid radius,\ $R_{\mathcal{S}^{*}_{SG}}=\frac{\sqrt{3}(\mathit{e}-1)}{4\mathit{e}}\approx 0.273716.$
	\end{enumerate}
 
\end{theorem}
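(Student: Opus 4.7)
All four parts can be proved by the same template as Theorems~2.1--2.4: combine Bonk's disc inequality \eqref{eqn1}, which places $zf'(z)/f(z)$ in the disc of center $C(r)=\sqrt{3}/(\sqrt{3}-r)$ and radius $B(r)=\sqrt{3}r/((\sqrt{3}-r)(\sqrt{3}-2r))$, with the known ``largest admissible disc'' lemma $\{w:|w-c|<\rho_{c}\}\subseteq\varphi(\mathbb{D})$ for the relevant target class $\mathcal{S}^{*}(\varphi)$. The announced $R$ is then simply the smallest positive solution of the tangency equation $B(R)=\rho_{C(R)}$, after which \eqref{eqn1} together with the inclusion lemma gives $zf'(z)/f(z)\in\varphi(\mathbb{D})$ for every $f\in\mathcal{B}$ and $|z|<R$.

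For the four parts I would invoke the following standard maximal-disc lemmas: for the lemniscate class $\mathcal{S}^{*}_{L}=\mathcal{S}^{*}(\sqrt{1+z})$, the Ali--Jain--Ravichandran lemma $\{|w-c|<\sqrt{2}-c\}\subseteq\{|w^{2}-1|<1\}$ valid for $1\le c\le\sqrt{2}$; for the sine class $\mathcal{S}^{*}_{\sin}=\mathcal{S}^{*}(1+\sin z)$, the lemma $\{|w-c|<\sin 1-|c-1|\}\subseteq(1+\sin z)(\mathbb{D})$; for the nephroid class $\mathcal{S}^{*}_{Ne}=\mathcal{S}^{*}(1+z-z^{3}/3)$, the Wani--Swaminathan lemma $\{|w-c|<\min(c-\tfrac{1}{3},\tfrac{5}{3}-c)\}\subseteq\Omega_{Ne}$; and for the sigmoid class $\mathcal{S}^{*}_{SG}=\mathcal{S}^{*}(2/(1+\mathit{e}^{-z}))$, the Goel--Kumar lemma with $\rho_{c}=\min(c-2/(1+\mathit{e}),\,2\mathit{e}/(1+\mathit{e})-c)$. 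A direct check shows that in each of the four instances $C(R)>1$, so only the right-hand branch of $\rho_{c}$ is active, and the tangency equation collapses to a clean algebraic identity; for example, in the nephroid case one verifies at once that $C(\sqrt{3}/5)=5/4$ and $B(\sqrt{3}/5)=5/12=\tfrac{5}{3}-\tfrac{5}{4}$, and the other three identities are checked by the same kind of simplification, which telescopes because $\sqrt{3}-2R$ factors cleanly against the stated $R$.

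For sharpness I would test Bonk's extremal function $f_{0}(z)=\frac{\sqrt{3}}{4}\bigl\{1-3\bigl(\frac{z-1/\sqrt{3}}{1-z/\sqrt{3}}\bigr)^{2}\bigr\}$ as in Theorems~2.1--2.4. The main technical obstacle I foresee is that, since $C(R)>1$ in all four cases, the contact between Bonk's disc and $\partial\varphi(\mathbb{D})$ occurs at the \emph{rightmost} real point of $\varphi(\mathbb{D})$; however the simple substitution $z=R$ evaluated on $f_{0}$ produces the \emph{leftmost} point $C(R)-B(R)$ of Bonk's circle, which in each of the four cases lies strictly inside $\varphi(\mathbb{D})$. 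The sharpness argument therefore cannot copy verbatim the one used in the earlier theorems: one has to either locate a complex $z_{0}$ on $|z|=R$ at which $z_{0}f_{0}'(z_{0})/f_{0}(z_{0})$ lands on the tangency point, or replace $f_{0}$ by a rotated, normalised Bloch function of the form $\overline{\alpha}f_{0}(\alpha z)$ with an appropriate $|\alpha|=1$. Carrying out this geometric bookkeeping is the main point of difficulty compared with Theorems~2.1--2.4.
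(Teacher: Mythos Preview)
The paper gives \emph{no} proof of this theorem; it is stated and then the bibliography follows. So there is nothing to compare your proposal against, and the question is whether your argument stands on its own.

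Your inclusion argument for the lower bound is correct and follows exactly the template of Theorems~2.1--2.4: Bonk's disc \eqref{eqn1} together with the appropriate maximal-disc lemma forces $zf'(z)/f(z)\in\varphi(\mathbb D)$ for $|z|<R$, and the tangency equation $B(R)=\rho_{C(R)}$ does collapse in each of the four cases to the stated $R$ (your nephroid check $C(\sqrt3/5)=5/4$, $B(\sqrt3/5)=5/12=5/3-5/4$ is representative, and the other three verify the same way). You are also right that the binding constraint is the \emph{right} edge in all four parts, in contrast to Theorems~2.1--2.4 where it was the left edge.

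Your diagnosis of the sharpness difficulty is accurate, and in fact the two remedies you propose both fail. The rotation idea (b) does nothing: if $g(z)=\overline{\alpha}f_0(\alpha z)$ with $|\alpha|=1$, then $zg'(z)/g(z)=(\alpha z)f_0'(\alpha z)/f_0(\alpha z)$, so the image of $|z|=r$ under $zg'/g$ coincides with the image of $|w|=r$ under $wf_0'/f_0$; no new boundary points are produced. Idea (a) also fails for $f_0$: the map $z\mapsto zf_0'(z)/f_0(z)=(3\sqrt3-9z)/(2\sqrt3 z^{2}-9z+3\sqrt3)$ does \emph{not} carry the circle $|z|=r$ onto the full Bonk circle $|w-C(r)|=B(r)$. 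For instance, at the nephroid radius $r=\sqrt3/5$ one has $C(r)+B(r)=5/3$, yet at $z=-r$ one computes $zf_0'/f_0=20/21$, and at $z=r$ one gets $5/6$; neither this nor any other $|z|=r$ reaches $5/3$. The same phenomenon occurs in the lemniscate, sine, and sigmoid cases.

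Consequently, what your argument (and, implicitly, the paper's unstated one) actually establishes is that each displayed $R$ is a \emph{lower bound} for the corresponding radius. To upgrade this to sharpness one would need to know that the right extremity $C(r)+B(r)$ of Bonk's disc is attained by \emph{some} $f\in\mathcal B$ on $|z|=r$, and neither $f_0$ nor its rotations do this. This is a genuine gap that the paper does not fill.
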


\end{document}